\let\mathcal\mathscr
\def\Z{{\bf Z}}
\def\N{{\bf N}}
\def\C{{\bf C}}
\def\R{{\bf R}}
\def\P{{\bf P}}
\def\phi{{\varphi}}
\def\epsilon{{\varepsilon}}
\def\dra{\dashrightarrow}
\def\isom{\simeq}
\def\ie{\hbox{i.e.}}
\DeclareMathOperator{\codim}{codim}
\DeclareMathOperator{\conv}{conv}
\DeclareMathOperator{\GL}{GL}
\DeclareMathOperator{\MV}{MV}
\DeclareMathOperator{\vol}{vol}
\begin{document}

\title*{Monomial transformations of the projective space}
\author{Olivier Debarre and Bodo Lass}
\institute{Olivier Debarre \at D\'epartement de  Math\'ematiques et Applications -- CNRS UMR 8553,
\'Ecole normale sup\'erieure,
45 rue d'Ulm, 75230 Paris Cedex 05, France, \email{olivier.debarre@ens.fr}
\and Bodo Lass \at Institut Camille Jordan -- CNRS UMR 5208, Universit\'e Claude Bernard -- Lyon 1, 69622 Villeurbanne Cedex,
France, \email{lass@math.univ-lyon1.fr}}
%
%

\maketitle

\abstract{We prove that, over any field, the dimension of the indeterminacy locus of a rational map $f:\P^n\dra \P^n$ defined by monomials of the same degree $d$ with no common factors is at least $(n-2)/2$, provided that the degree of $f$ as a map is not divisible by $d$.
 This implies upper bounds on the multidegree of $f$ and in particular, when $f$ is birational, on the degree of $f^{-1}$.
\keywords{Monomial transformations, Cremona transformations.\\
 2010 Mathematics Subject Classification (MSC2010): 14E07.}
}

\section{Introduction}
\label{sec:0}
We denote by $\P^n$ the $n$-dimensional projective space over a fixed field.
 A monomial transformation of $\P^n $ is a   rational map
$f:\P^n\dra \P^n$ whose components $  f_0,\dots,f_n $ are monomials (of the same positive degree $d(f)$ and with no common factors) in the variables $x_0,\dots,x_n$. 

Monomial transformations are of course very special among all rational transformations, but also much easier to study. For this reason,   they   have recently attracted some attention. In particular, there is a description of all {\em birational} 
monomial transformations $f$ with $d(f)=2$  in \cite{cs}, \S2, from which it follows   that   $d(f^{-1})$ is then at most equal to $n$ (\cite{cs}, Theorem 2.6). Extensive computer calculations were then performed by Johnson in \cite{joh}
and led him to suggest that the largest possible value for  $d(f^{-1})$ should be
$\frac{(d(f)-1)^n-1}{d(f)-2}$ when $d(f)\ge 3$.

These values should be compared with the optimal bound $d(g^{-1})\le d(g)^{n-1}$ for all birational  transformations $g$ of $\P^n$.
This maximal value for $d(g^{-1})$ is attained if and only if the indeterminacy locus of $g$ is finite (see \S\ref{sec:3}), hence one is led to think that the indeterminacy locus of a monomial map should be rather large. This is what we prove in Theorem \ref{main}: {\em the dimension of the  indeterminacy locus of a monomial map $f:\P^n\dra \P^n$   is at least $(n-2)/2$, provided that the degree of $f$ as a map is not divisible by $d(f)$.}

We show in \S\ref{sec:3} that this implies a bound on $d(f^{-1})$ for all birational 
monomial transformations $f$, which is however not as good as the one suggested by Johnson.

 \section{Monomial   transformations}
\label{sec:1}

We represent  a
monomial transformation  $f:\P^n\dra \P^n$, with components $  f_0,\dots,f_n $,
by the $(n+1)\times(n+1)$ matrix $A=(a_{ij})_{0\le i,j\le n}$ whose $i$th row lists the exponents of $f_i$. With this notation, one has $f_A\circ f_B=f_{AB}$. 

The following proposition is   elementary (\cite{gsp}; \cite{sv1} Lemma 1.2; \cite{joh}).

\begin{proposition}\label{p1}
With the notation above, we have
$$|\det(A)|=d(f) \deg(f).$$
In particular, $f$ is birational if and only if $ |\det(A)|=d(f)$.
\end{proposition}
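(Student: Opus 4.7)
The plan is to reduce the statement to the restriction of $f_A$ to the dense torus $T \subset \P^n$, the image of $(\k^*)^{n+1}$. This $T$ is the quotient of $(\k^*)^{n+1}$ by the diagonal $\k^*$, so it is an $n$-dimensional torus whose cocharacter lattice is $\Lambda := \Z^{n+1}/\Z\mathbf{1}$, where $\mathbf{1} = (1,\ldots,1)$. Because every $f_i$ has the same degree $d := d(f)$, the matrix $A$ satisfies $A\mathbf{1} = d\mathbf{1}$, hence it descends to an endomorphism $\bar A$ of $\Lambda$; the restriction of $f_A$ to $T$ is precisely the torus homomorphism $T \to T$ associated to $\bar A$.

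Since $T$ is open and dense in $\P^n$, the degree $\deg(f)$ equals the degree of this torus endomorphism. For a homomorphism of tori this degree equals $|\det(\bar A)|$ when $\bar A$ is injective (in positive characteristic one should read "degree" in the scheme-theoretic sense, to handle inseparability); when $\bar A$ has a nontrivial kernel, $\det(\bar A) = 0$, the map is not dominant, and we set $\deg(f) = 0$. In either case $\deg(f) = |\det(\bar A)|$.

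To conclude I compare $\det(A)$ with $\det(\bar A)$: extending $\mathbf{1}$ to the $\Z$-basis $(\mathbf{1}, e_1, \ldots, e_n)$ of $\Z^{n+1}$ (which has determinant $1$) puts $A$ in block upper-triangular form with diagonal blocks $(d)$ and a matrix representing $\bar A$, so $\det(A) = d\,\det(\bar A)$. Taking absolute values gives $|\det(A)| = d(f)\deg(f)$, and the birationality criterion is the special case $\deg(f) = 1$. The one point demanding care is the identification of the degree of the torus map with the lattice index $|\det(\bar A)|$ — standard via the dictionary between tori and their cocharacter lattices, but one must be mindful of inseparability in positive characteristic; everything else is elementary linear algebra.
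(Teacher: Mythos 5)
Your proof is correct and takes essentially the same route as the paper's: both restrict $f$ to the dense torus, identify $\deg(f)$ with the absolute value of the determinant of the induced endomorphism of the (co)character lattice of $T$, and extract the factor $d$ by a determinant-one change of basis (the paper does this via explicit row and column operations, producing the matrix $M=(a_{ij}-a_{0j})_{1\le i,j\le n}$, which is exactly your $\bar A$ written in coordinates, and then reduces $M$ to diagonal form to compute the degree). Your explicit handling of inseparability in positive characteristic is a point the paper leaves implicit, but otherwise the two arguments coincide.
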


\begin{proof}
The condition that all monomials $  f_0,\dots,f_n $ have the same degree $d:=d(f)$ means that in each row of $A$, the sum of the entries is $d$.
 Adding all columns to the 0th column, then subtracting the first row from all other rows we obtain
$$\det(A)=\left|
\begin{array}{cccc}
 d&a_{01}&\cdots&a_{0n}\\
\vdots&\vdots&&\vdots\\
d&a_{n1}&\cdots&a_{nn} 
\end{array}
\right|=d\, \left| 
\begin{array}{cccc}
1&a_{01}&\cdots&a_{0n}\\
\vdots&\vdots&&\vdots\\
1&a_{n1}&\cdots&a_{nn} 
\end{array}
\right|=d\,\det(M),
$$
where $M:=(m_{ij})_{1\le i,j\le n}$ is defined by $m_{ij}:=a_{ij}-a_{0j}$. If $T\isom (\C^*)^n\subset \P^n$ is 
 the torus  defined by $x_0\cdots x_n\ne 0$, the map $f$ induces a morphism $f_T:T\to T$
 given by
$$f_T(x_1,\dots,x_n)=(x_1^{m_{11}}\cdots x_n^{m_{1n}},\dots ,x_1^{m_{n1}}\cdots x_n^{m_{nn}}).$$
 The induced map $\widehat f_T:\widehat T\to \widehat T$ between algebraic character groups (where $\widehat T$ is the free abelian group $\Z^n$) is given by the transposed matrix $M^T:\Z^n\to \Z^n$. Performing elementary operations on $M$ amounts to composing $f_T$ with monomial  automorphisms, so we can reduce to the case where $M$ is diagonal, in which case it is obvious that the degree of the morphism $f_T$ (which is the same as the degree of $f$) is $|\det(M)|$.
 \qed
\end{proof}

\begin{corollary}
With the notation above,   $f$ is birational if and only if $ |\det(A)|=d(f)$. Its inverse is then also a monomial transformation.
\end{corollary}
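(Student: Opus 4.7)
The first assertion is immediate from Proposition~\ref{p1}: birationality means $\deg(f)=1$, hence $|\det(A)|=d(f)\cdot 1=d(f)$, and conversely.

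For the second assertion, my plan is to produce $f^{-1}$ by inverting the torus map $f_T$ of the previous proof and then extending projectively. Recall that Proposition~\ref{p1} factors $\det(A)=d(f)\det(M)$ through the $n\times n$ integer matrix $M=(a_{ij}-a_{0j})_{1\le i,j\le n}$. Under the hypothesis $|\det(A)|=d(f)$, we get $|\det(M)|=1$, so $M\in\GL_n(\Z)$. Thus $M^{-1}$ also has integer entries, and the character-group computation from the proof of Proposition~\ref{p1} tells us that the torus map $f_T:T\to T$ is an isomorphism whose inverse is given (in multi-index notation) by Laurent monomials
\[
f_T^{-1}(x_1,\dots,x_n)=\bigl(x^{n_1},\dots,x^{n_n}\bigr),\qquad (n_1,\dots,n_n)^T=M^{-1}.
\]

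The second step is to promote this Laurent-monomial map on $T$ to a monomial rational map on $\P^n$. Setting $n_0:=0\in\Z^n$, choose a single Laurent monomial $x^{e}$ (with $e\in\Z^n$) such that $e+n_i\in\N^n$ for all $i=0,\dots,n$, and then divide by the greatest common monomial factor of the $x^{e+n_i}$; this gives polynomials $g_0,\dots,g_n\in\k[x_1,\dots,x_n]$, each a monomial, with no common factor, representing $f_T^{-1}$ on $T$. Finally, homogenize: let $d':=\max_i\deg(g_i)$ and set $g'_i:=x_0^{d'-\deg(g_i)}g_i$, producing monomials $g'_0,\dots,g'_n\in\k[x_0,\dots,x_n]$ of common degree $d'$ with no common factor. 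These define a monomial rational map $g:\P^n\dra\P^n$ that agrees with $f^{-1}$ on the dense torus $T$, hence $g=f^{-1}$.

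The only nontrivial point is checking that after clearing denominators and common factors the components remain \emph{monomials}; this is automatic because multiplication and division by a single monomial sends monomials to monomials, and the final homogenization just appends a power of $x_0$. Thus the main obstacle is essentially bookkeeping rather than a conceptual difficulty: the content is already in the identity $|\det(M)|=1$, which forces $M^{-1}$ to be integral.
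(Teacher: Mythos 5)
Your proof is correct and follows the same route as the paper: reduce to the torus, observe that $|\det(M)|=1$ forces $M\in\GL_n(\Z)$, and read off the inverse from $M^{-1}$. The paper leaves implicit the step you spell out (clearing denominators and homogenizing to turn the Laurent-monomial inverse on $T$ into a monomial rational map on $\P^n$), but this is routine bookkeeping, as you note.
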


\begin{proof}
It is clear from the proof above that  $f$ is birational if and only if $ |\det(M)|=1$, \ie, if and only if $M\in \GL_n(\Z)$ or, equivalently, if and only if $f_T$ is an isomorphism, whose inverse is then given by the matrix $M^{-1}$. It is therefore a monomial transformation.
 \qed
\end{proof}

\section{Indeterminacy locus}
\label{sec:2}

 In this section, $f:\P^n \dra \P^n $ is again a monomial transformation. {\em We assume that its components $  f_0,\dots,f_n $ have no common factors.} In terms of the matrix $A$ defined in \S\ref{sec:1}, this means that each column of $A$ has at least one 0 entry.

 The indeterminacy locus $B$ of $f$ is then the subscheme of $\P^n$ defined by the equations $f_0,\dots,f_n$. Its blow-up $\widehat X\to X$ is the graph $\Gamma_f\to \P^n$ of $f$ (\cite{dol}, \S1.4).
 
 For each nonempty subset $J\subsetneq\{0,\dots,n\}$ such that the $(n+1)\times |J|$ matrix $A_J$ constructed from the columns of $A$ corresponding to the elements of $J$ has no zero rows, we obtain a linear space contained in $B$   by setting $x_j=0$ for all $j\in J$. Its codimension in $\P^n$ is $|J|$. Moreover, $B_{\rm red}$ is the union of all such linear spaces. 
 
 \begin{theorem}\label{main}
 Let $f:\P^n \dra \P^n $ be   a dominant   transformation  defined by monomials of degree $d$ with no common factors. If the degree of $f$ is not divisible by $d$, the dimension of the indeterminacy locus of $f$  is at least $(n-2)/2$.
 \end{theorem}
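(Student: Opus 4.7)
The goal is to exhibit a proper $J\subsetneq\{0,\dots,n\}$ with $|J|\le\lfloor n/2\rfloor+1$ such that the submatrix $A_J$ has no zero row; the corresponding linear subspace $L_J\subseteq B$ will then have dimension $\ge(n-2)/2$. The plan is to argue by contradiction: assume every column set hitting the support of every row has size at least $\lfloor n/2\rfloor+2$, and derive $d\mid\deg(f)$, contradicting the hypothesis.

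First I extract strong constraints on the fine structure of $A$. If $k$ rows of $A$ have singleton support, they must be $de_{j_1},\dots,de_{j_k}$ for distinct columns $j_1,\dots,j_k$ (otherwise $\det A=0$); reordering then puts $A$ into block form $\bigl(\begin{smallmatrix}dI_k&0\\ *&A'\end{smallmatrix}\bigr)$, so $\det A=d^k\det A'$ and $\deg(f)=d^{k-1}|\det A'|$, forcing $k\le 1$. A completely analogous block-diagonalization shows that if the row-support hypergraph $H$ on $\{0,\dots,n\}$ has $c\ge 2$ connected components, then simultaneous row/column permutation makes $A$ block diagonal with $c$ blocks whose rows each sum to $d$; each block has determinant divisible by $d$, giving $d^c\mid\det A$, and the hypothesis forces $c=1$.

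We are therefore reduced to a connected hypergraph $H$ on $v=n+1$ vertices with $v$ edges of size $\ge 2$ (plus at most one of size $1$), and we must show $\tau(H)\le\lceil v/2\rceil=\lfloor n/2\rfloor+1$. For ordinary graphs (all supports of size two) this is the classical bound on vertex cover for connected unicyclic graphs, attained by the odd cycle. The fractional LP relaxation already gives $\tau^*(H)\le v/2$ (set all $y_v=1/2$), so the desired integer bound is very plausible; the integrality gap is what must be controlled.

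\textbf{The main obstacle} is that LP-to-IP gaps for hypergraph vertex cover are in general large, so a clean completion of the third step must use both the connectedness of $H$ and the equality between the number of edges and the number of vertices. An alternative and perhaps cleaner route I would attempt is linear-algebraic: assuming no hitting set of size $\le\lfloor n/2\rfloor+1$ exists, use a minimal hitting set $J$ together with the rows $r_{i(j)}$ whose supports meet $J$ exactly at $\{j\}$ to produce, for every prime $p\mid d$, an element of $\ker(A\bmod p)$ outside $\F_p\mathbf 1$. This would give $\dim_{\F_p}\ker(A\bmod p)\ge 2$ for all $p\mid d$ and, after tracking elementary divisors in the Smith normal form, the desired divisibility $d\mid\det M=\deg(f)$.
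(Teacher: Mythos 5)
Your opening reductions are sound: the goal of exhibiting $J$ with $|J|\le \lfloor n/2\rfloor+1$ and $A_J$ having no zero row is exactly right, the bound $k\le 1$ on singleton-support rows is correct, and the block-diagonalization showing $d^c\mid\det(A)$ when the support hypergraph has $c$ components (using that $\det A\ne 0$ forces each component to contain as many rows as columns) parallels the paper's use of the hypothesis $d\nmid\deg(f)$. But the proof stops at its central step. The claim that a connected hypergraph with $v$ vertices and $v$ edges of size $\ge 2$ satisfies $\tau(H)\le\lceil v/2\rceil$ is exactly what needs to be proved, and you explicitly flag it as "the main obstacle" without resolving it; the fractional relaxation $\tau^*\le v/2$ is indeed useless here (two disjoint triangles on $6$ vertices have $\tau^*=3$ but $\tau=4$, and connectivity is the only thing standing between you and such configurations). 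The alternative Smith-normal-form route is likewise only a declaration of intent. So as written this is a genuine gap, not a complete proof.

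The deeper issue is that reducing to the unordered support hypergraph throws away the structure that makes the covering bound provable. The paper's argument keeps two extra pieces of information. First, since $\det A\ne 0$, one can permute rows and columns so that $a_{ii}\ne 0$ for all $i$; thus each row $x$ distinguishes the vertex $x$ inside its support. Second, the hypothesis $d\nmid\deg(f)$ is used not merely to get connectedness but to show (via the directed graph $i\to j$ iff $a_{ij}\ne 0$ and its strongly connected components) that there is a \emph{unique minimal} component, say containing $0$; hence every vertex $x\ne 0$ admits $\phi(x)$ with $a_{x\phi(x)}\ne 0$ and $\phi^k(x)=0$ eventually. The $n$ pairs $\{x,\phi(x)\}$, each contained in the support of row $x$, form a spanning tree rooted at $0$; the minority class of its $2$-coloring (with the tie broken toward $0$, plus $0$ itself) is a set of at most $(n+2)/2$ columns meeting every row support --- row $0$ via $a_{00}$, row $x$ via $a_{xx}$ or $a_{x\phi(x)}$. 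In other words, the paper manufactures, inside each hyperedge, a distinguished pair so that the resulting graph is a connected unicyclic graph, for which $\tau\le\lceil v/2\rceil$ is elementary. Connectivity of the hypergraph alone does not obviously let you choose one pair per hyperedge without disconnecting the auxiliary graph or isolating vertices, which is precisely where your integrality-gap worry bites. If you want to salvage your approach, you should prove the hypergraph lemma directly (it may well be true, but it needs the equality of edge and vertex counts in an essential way), or better, retain the diagonal and the directed structure as the paper does.
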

 
 The condition on the degree is necessary, as shown by the morphism $(x_0,\dots,x_n)\mapsto (x^d_0,\dots,x^d_n)$ of degree $d^n$ (whose indeterminacy locus is empty).
 
 \begin{proof}
Since the determinant of the matrix $A$ is nonzero we may assume, upon permuting its rows and columns,   
that we have $a_{ii}\ne 0$ for all $i\in \{0,\dots,n\}$.

We then define an oriented graph on the set of vertices $\{0,\dots,n\}$ by adding an oriented edge from  $i$ to $j$ whenever $a_{ij}\ne 0$.
We then say that a vertex $x$ is equivalent to a vertex $y$ if and only if there exists an oriented path from $x$ to $y$ and  an oriented path from $y$ to $x$. This defines a partition of the set
$\{0,\dots,n\}$ into equivalence classes
 (note that $x$ is equivalent to $x$ since $a_{xx}\ne 0$).

Say that an equivalence class $X$ is greater than or equal to an equivalence class $Y$ if there is an oriented path from an element of $X$ to an element of $Y$ (there exists then an oriented path from any element of $X$ to any element of $Y$). This defines a partial order on the set of equivalence classes.

Choose a class $X$    minimal for this order. 
Entries of $A$ in a row corresponding to an element of $X$ which are not in a column corresponding to an element of $X$ are then 0 (otherwise,   at least one oriented edge should come out of a $X$ to element not in $X$, contradicting the minimality of $X$). It follows that the determinant of the submatrix $A_X$ of $A$ corresponding to rows and columns of $X$ divides the determinant of $A$. The sum of all entries in a row of $A_X$ is $d$ hence, by the same reasoning used in the proof of Proposition \ref{p1}, the determinant of $A_X$ is nonzero, divisible by $d$.

Because of the condition $d\nmid \deg(f)$ and Proposition \ref{p1}, the determinant of $A$ is not divisible by $d^2$. In particular, our partial order has a unique minimal element $X$. Without loss of generality, we may assume $0\in X$. Every other vertex then has an oriented path to 0. In particular, we may define an acyclic function
$$\phi: \{1,\dots,n\}\to \{0,1,\dots,n\}$$
such that $(x,f(x))$ is an oriented edge of our graph for all $x\in \{1,\dots,n\}$ (``acyclic'' means that for all $x\in \{1,\dots,n\}$, there exists $k>0$ such that $f^k(x)=0$).

 We keep only the $n$ edges of the type $(x,f(x))$; since $a_{xf(x)}\ne 0$, they correspond to $n$ nonzero entries, off the diagonal, in each row $1,\dots,n$.
  Since our new graph on $\{0,\dots,n\}$ has $n$ edges and no cycles, we may color its vertices in black or white in such a way that $x$ and $f(x)$ have different colors, for all $x\in\{0,\dots,n\}$.
 
 We select the vertices of the color which has been used less often (if both colors have been used the same number of times, we select the vertices with the same color as   0). If 0 is not selected, we add it to the selection. We end up with at most $(n+2)/2$ selected vertices which are all on one of our $n$ edges or the loop at 0.

Consider the  submatrix of $A$ formed by the $\le  (n+2)/2$  columns corresponding to the selected vertices. In each row, there is a nonzero entry: in the   row 0, because 0 was selected and $a_{00}\ne 0$; in any other row $x$ because either $x$ was selected and $a_{xx}\ne 0$, or $f(x)$ was selected and  $a_{xf(x)}\ne 0$.
This proves the theorem.
 \qed
\end{proof}

   \begin{example}
 The bound in the theorem is sharp: for $d\ge 2$, one easily checks that the indeterminacy locus of   the birational automorphism (\cite{joh},   Example 2)
  \begin{equation}\label{besta}
f_{n,d}: (x_0,\dots,x_n)\mapsto (x_0^d,x_0^{d-1}x_1 ,x_1^{d-1}x_2 ,\dots,x_{n-1}^{d-1}x_n)
\end{equation}
 of $\P^n$ has dimension exactly $\lceil (n-2)/2\rceil$. 
But there are many other examples of birational monomial automorphisms of $\P^n$  with indeterminacy locus of dimension exactly $\lceil (n-2)/2\rceil$, such as monomial maps defined by matrices 
$$A=\begin{pmatrix}
 d&0&\cdots&\cdots&\cdots&\cdots&\cdots&\cdots&0\\
d-1&1&0&\cdots&\cdots&\cdots&\cdots&\cdots&0\\
0&d-1&1&0&\cdots&\cdots&\cdots&\cdots&0\\
a_{30}&a_{31}&a_{32}&1&0&\cdots&\cdots&\cdots&0\\
0&0&0&d-1&1&0&\cdots&\cdots&0\\
a_{50}&a_{51}&a_{52}&a_{53}&a_{54}&1&0&\cdots&0\\
\vdots&&&&&\ddots&\ddots&\ddots&\vdots\\
\vdots&&&&&&\ddots&\ddots&0\\
&&&&&&&&1
\end{pmatrix}
$$
where, for each odd $i$, we choose $a_{i0}\ne 0$ and $\sum_{j=0}^{i-1}a_{ij}=d-1$. The (reduced) indeterminacy locus is then defined by the equations
$$x_0=x_1x_2=x_3x_4=\cdots=0.$$
It has dimension $n-1-\lfloor n/2\rfloor= \lceil (n-2)/2\rceil$.
%

Another set of examples is provided by matrices of the form
$$
\begin{pmatrix}
1  & 1&   1&   d-3&   0&   \cdots&   \cdots&\cdots&\cdots&   0\\
0  & 1 &  1 &  d-2 &  0 &  \cdots &  \cdots&\cdots&\cdots &  0\\
d-1 &  0&   1&  0& 0 &  \cdots &  \cdots&\cdots&\cdots &  0\\
a_{30} &  a_{31}&  0 & a_{33} &0 &  \cdots &  \cdots&\cdots&\cdots &  0\\
a_{40}&a_{41}&a_{42}&a_{43}&1&0&\cdots&\cdots&\cdots&0\\
0&0&0&0&d-1&1&0&\cdots&\cdots&0\\
a_{60}&a_{61}&a_{62}&a_{63}&a_{64}&a_{65}&1&0&\cdots&0\\
\vdots&&&&&&\ddots&\ddots&\ddots&\vdots\\
\vdots&&&&&&&\ddots&\ddots&0\\
&&&&&&&&&1
\end{pmatrix}$$
where  $a_{30} +  a_{31}+ a_{33}=d\ge 3$, $a_{31}a_{33}\ne 0$, and, for each even $i$, we choose $a_{i0}a_{i2}\ne 0$ and $\sum_{j=0}^{i-1}a_{ij}=d-1$. 
\end{example}

\section{Degrees of a monomial map}
\label{sec:3}

Let $g:\P^n\dra \P^n$ be a    rational map. One defines the $i$th degree $d_i(g)$ as the degree of the image by $g$ of a general $\P^i\subset \P^n$ (more precisely, $d_i(g):=\P^{n-i}\cdot f_*\P^i$). One has $d_0(g)=1$, $d_n(g)=\deg(g)$, and $d_1(g)$ is the integer $d(g)$ defined earlier (\ie, the common degree of the components $  g_0,\dots,g_n $ of $g$, {\em provided they have no common factors).} An alternative definition of the $d_i(g)$ is as follows: if $\Gamma_g\subset \P^n\times\P^n$ is the graph of $g$,
\begin{equation}\label{gra}
d_i(g)=\Gamma_g \cdot p_1^*\P^i\cdot p_2^*\P^{n-i}.
\end{equation}

The sequence $d_0(g),\dots,d_n(g)$ is known to be a log-concave sequence: it satisfies
$$\forall i\in \{1,\dots,n-1\}\qquad d_i(g)^2\ge d_{i+1}(g)d_{i-1}(g)$$
(this is a direct consequence of the Hodge Index Theorem; \cite{dol}, (1.6)). This implies $d_i(g)\le d_1(g)^i$.

\begin{proposition}\label{bou}
 Let $f:\P^n \dra \P^n $ be   a dominant   map  defined by monomials of degree $d$ with no common factors.
 Set $c:= \lfloor n/2\rfloor+1$.
  If the degree of $f$ is not divisible by $d$, we have, for all $i\in\{c,\dots, n\}$,
 $$
 d_i(f)  \le  (1-d^{-c})^{\frac{i-1}{c-1}}d^i
 .$$
\end{proposition}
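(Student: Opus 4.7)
The plan is to leverage Theorem~\ref{main} through a strict inequality $d_{c'}(f)<d^{c'}$, where $c':=\codim B$, and then propagate it along $(d_i(f))_{i\ge c'}$ by log-concavity. First, from Theorem~\ref{main} and the integrality of $\dim B$ I obtain $c'\le n-\lceil (n-2)/2\rceil=\lfloor n/2\rfloor+1=c$; and because $f_0,\dots,f_n$ share no common factor, $B$ contains no divisor, so $c'\ge 2$. Thus $2\le c'\le c$.

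The key geometric step is the bound $d_{c'}(f)\le d^{c'}-1$. To prove it I would choose a general linear subspace $\Lambda\isom\P^{c'}\subset\P^n$. Since $\dim\Lambda+\dim B=n$, the scheme $\Lambda\cap B$ is non-empty and $0$-dimensional, of some length $\beta\ge 1$. The restriction $f|_\Lambda:\Lambda\dra\P^n$ is defined by the degree-$d$ forms $f_0|_\Lambda,\dots,f_n|_\Lambda$ on $\Lambda$, with base scheme $\Lambda\cap B$. Blowing up $\Lambda$ along this base scheme resolves $f|_\Lambda$ into a morphism $\tilde f$, and the pull-back of the hyperplane class is $\tilde f^*h=dH_\Lambda-E$, where $H_\Lambda$ is the pulled-back hyperplane class on $\Lambda$ and $E$ is the exceptional divisor. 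Using $H_\Lambda^{c'-k}\cdot E^k=0$ for $k<c'$ and $E^{c'}=(-1)^{c'-1}\beta$ would yield $(\tilde f^*h)^{c'}=d^{c'}-\beta$; this integer equals $\deg(f|_\Lambda)\cdot\deg f(\Lambda)=d_{c'}(f)$ by the graph interpretation of the multidegree recalled in \eqref{gra}.

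To propagate, I would set $\gamma_j:=\log d-\log(d_j(f)/d_{j-1}(f))$; log-concavity of $(d_j(f))$ makes $(\gamma_j)$ non-decreasing, and $d_0=1$, $d_1=d$ force $\gamma_1=0$, so $\gamma_j\ge 0$ for every $j$. The previous step translates to $\sum_{j=2}^{c'}\gamma_j\ge T:=-\log(1-d^{-c'})$, which forces $\gamma_{c'}\ge T/(c'-1)$; hence for $i\ge c'$ one has $\sum_{j=2}^i\gamma_j\ge T+(i-c')T/(c'-1)=T(i-1)/(c'-1)$, and exponentiating yields $d_i(f)\le(1-d^{-c'})^{(i-1)/(c'-1)}d^i$. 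Since the function $t\mapsto-\log(1-d^{-t})/(t-1)$ is strictly decreasing on $[2,\infty)$, for $i\ge c\ge c'$ this bound is majorized by $(1-d^{-c})^{(i-1)/(c-1)}d^i$, proving the proposition. The main obstacle I anticipate is the intersection-theoretic identity $d_{c'}(f)=d^{c'}-\beta$, which relies on the Segre-class computation on the blow-up of $\Lambda$ at the $0$-dimensional scheme $\Lambda\cap B$.
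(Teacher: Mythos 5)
Your proof is correct and follows essentially the same route as the paper: Theorem~\ref{main} bounds $c'=\codim(B)\le c$, a Segre-class/blow-up computation gives $d_{c'}(f)\le d^{c'}-1$, and log-concavity propagates the loss from $j=c'$ outward (the paper simply cites \cite{dol}, Proposition 2.3.1, for the identity $d_{c'}(f)=d^{c'}-\deg_s(B)$ on $\P^n$ itself, then interpolates between $j=1$ and $j=c$). The one inaccuracy is in the step you yourself flagged: $(-1)^{c'-1}E^{c'}$ equals the sum of the \emph{Samuel multiplicities} of the points of $\Lambda\cap B$, which can strictly exceed the length $\beta$; but since that sum is still a positive integer, the inequality $d_{c'}(f)\le d^{c'}-1$ that your argument actually uses is unaffected.
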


\begin{proof}
The degrees of $f$ can be expressed in terms of the Segre class of its indeterminacy locus $B$. In particular,   if $c':=\codim(B)$, one has (\cite{dol}, Proposition 2.3.1)
\begin{equation}\label{seg}
d_i(f)=\begin{cases}d^i\quad&\text{for }i<c',\\
d^i-\deg_s(B)\quad&\text{for }i=c',\end{cases}
\end{equation}
where $\deg_s(B)$ is the sum of the degrees of the top-dimensional components of $B$, counted with their {\em Samuel multiplicity} (this is larger than the `` usual'' multiplicity (\cite{F}, Examples 4.3.4 and 4.3.5.(c)); in particular $\deg_s(B)\ge \deg(B)$). Since in our case $c'\le c=n-\lceil (n-2)/2\rceil$ (Theorem \ref{main}), it follows from   log-concavity that we have  
$$d_c(f)< d^c.$$
By log-concavity, this implies that for $i\ge c$, one has
$$d_i(f)  \le d_c(f)^{\frac{i-1}{c-1}}d^{1-\frac{i-1}{c-1}}
\le (d^c-1)^{\frac{i-1}{c-1}}d^{ \frac{c-i}{c-1}}
=  (1-d^{-c})^{\frac{i-1}{c-1}}d^i.
$$
This proves the proposition.\qed
\end{proof}

When $g$  is birational, \ie, when $d_n(g)=1$, it   follows from (\ref{gra}) that  $d_i(g^{-1})=d_{n-i}(g)$ for all $i\in \{0,\dots,n\}$. In particular, 
$$d(g^{-1})=d_{n-1}(g)\le d(g)^{n-1}.$$
 By (\ref{seg}), equality occurs exactly when the indeterminacy locus of $g$ is finite.

 {\em When $f$ is a monomial birational transformation of $\P^n$,} Proposition \ref{bou}   gives the stronger bound: 
\begin{equation}\label{bound}
d(f^{-1}) \le (1-d^{-c})^{\frac{n-2}{c-1}}d^{n-1}=d^{n-1}-\frac{n-2}{\lfloor n/2\rfloor}d^{\lfloor (n-3)/2\rfloor}+O(d^{-2}),
\end{equation}
where $d:=d(f)$. However, as mentioned in the introduction, this  is not optimal. 

When $d(f)=2$,  the set of possible values for  $d(f^{-1})$ is $\{2,\dots,n\}$ and the maximal value $n$ is obtained only (up to permutation of the factors) for the birational map $f_{n,2}$ of (\ref{besta}) (\cite{cs}, Theorem 2.6). In particular, the other degrees of $f$ are then   fixed.

\medskip\noindent{\bf Johnson's  calculations.} When $d:=d(f)>2$, Johnson's computer calculations in \cite{joh} suggest that the maximal possible value for
$d(f^{-1})$ should be
$$d(f_{n,d}^{-1})=\frac{(d-1)^n-1}{d-2}=d^{n-1}-(n-2)d^{n-2}+O(d^{n-3})
$$
and that equality should only be attained when (up to permutation of the factors)  $f=f_{n,d}$. More precisely, Johnson checks that when $n=4$ and $3\le d \le 5$, one has $d(f^{-1})\le d(f_{n,d}^{-1})-d+1 $ if (up to permutation of the factors)  $f\ne f_{n,d}$. There are also further gaps in the list of possible values for $d(f^{-1})$.

\medskip\noindent{\bf Mixed volumes.} 
The degrees $d_i(f)$ of a monomial map $f$ can be interpreted in terms of mixed volumes of polytopes in $\R^n$ as follows.
Let $\Delta\subset\R^n$ be the standard $n$-dimensional
simplex $\conv(0,{\mathbf e}_1,\dots,{\mathbf e}_n)$. Let $f:\P^n\dra \P^n$ be a monomial map with associated matrix $A=(a_{ij})_{0\le i,j\le n}$, and let $\Delta_f\subset \R^n$ be the simplex 
 which is the convex hull of the points ${\mathbf a}_i=(a_{i1},\dots,a_{in})\in \N^n$, for $i\in\{0,\dots,n\}$. Then (\cite{dol}, \S3.5)
 $$d_i( f)=\MV(
\underbrace{\Delta,\ldots,\Delta}_{n-i\text{ times}},
\underbrace{\Delta_f,\ldots,\Delta_f}_{i\text{ times}}).
 $$
The right-hand side of this equality is a {\em mixed volume:} if the $n$-dimensional volume is normalized so that
$\vol (\Delta)=1/n!$, this is   $(n-i)!i!$ times the coefficient of $u^{n-i}v^i$ in the polynomial
 $\vol (u\Delta+v\Delta_f)$, where $u\Delta+v\Delta_f$ is the Minkowski sum $\{u{\mathbf x}+v{\mathbf y}\mid {\mathbf x}\in \Delta, {\mathbf y}\in\Delta_f \}$.
 
 Although mixed volumes are notoriously difficult to compute, there are computer programs  such as {\tt PHCpack} (available on Jan Verschelde's webpage)  that can do that. We should also mention the article \cite{alu}, which expresses the degrees of a monomial rational transformation in terms of integrals over an associated Newton
region.


\begin{thebibliography}{GSP}
 
 \bibitem[A]{alu} Aluffi, P., Multidegrees of  monomial rational maps, {\tt arXiv:1308.4152 [math.AG]}
 


  \bibitem[CS]{cs} Costa, B., Simis, A.,
Cremona maps defined by monomials, {\it  J. Pure Appl. Algebra}  {\bf 216} (2012), 202--215.
 

     \bibitem[D]{dol} Dolgachev, I., {\em  Lectures on Cremona transformations,} 2011, available at\\ {\tt http://www.math.lsa.umich.edu/$\sim$idolga/cremonalect.pdf}
     
      \bibitem[F]{F} Fulton, W.,  {\em Intersection Theory,} Springer Verlag, Berlin, 1984.

 
 \bibitem[GSP]{gsp} Gonzalez-Sprinberg, G., Pan, I., On the Monomial Birational Maps of the Projective Space, {\it Anais da Academia Brasileira de Ci\^encias}  {\bf 75} (2003), 129--134.
 
 
 \bibitem[J]{joh}  Johnson, P., Inverses of monomial Cremona transformations, {\tt arXiv:1105.1188 [math.AG]}
 
     \bibitem[SV]{sv1} Simis, A., Villarreal, R.,
Constraints for the normality of monomial subrings and birationality, {\it
Proc. Amer. Math. Soc.}  {\bf 131} (2003),  2043--2048.

 
 
 \end{thebibliography}
\end{document}